\documentclass[12pt]{amsart}

\usepackage{nicematrix}
\usepackage{calligra,mathrsfs}
\usepackage[all]{xy}
\usepackage{float, comment}
\usepackage{mathtools}
\usepackage{amsmath}
\usepackage{amsthm}
\usepackage{amssymb}
\usepackage{amsbsy}
\usepackage{amstext}
\usepackage{amsopn}
\usepackage{mathrsfs} 
\usepackage{enumerate}
\usepackage{xcolor}
\usepackage{graphicx} 
\usepackage{microtype} 
\usepackage[margin=1in,marginparwidth=0.8in, marginparsep=0.1in]{geometry}
\usepackage[pagebackref, bookmarks=true, bookmarksopen=true, bookmarksdepth=3,bookmarksopenlevel=2, colorlinks=true, linkcolor=blue, citecolor=blue, filecolor=blue, menucolor=blue, urlcolor=blue]{hyperref}
\usepackage{tikz}
\usepackage{bbm}
\usepackage[all]{xy}
\usetikzlibrary{arrows,calc,positioning,decorations.pathreplacing} 

\usepackage[shortlabels]{enumitem}

\numberwithin{equation}{section}

\usepackage{todonotes}

\usepackage{xcolor}

\usepackage{stmaryrd}

\usepackage{tikz-cd}

\usepackage{enumitem}

\usepackage{quiver}

\numberwithin{equation}{section}
\newtheorem{thm}{Theorem}[section]

\newtheorem{definition}[thm]{Definition}

\newtheorem*{cor*}{Corollary}

\newcommand{\nc}{\newcommand}

\nc{\bA}{\mathbb A}
\nc{\bC}{\mathbb C}
\nc{\bc}{{\bf c}}
\nc{\bD}{\mathbb D}
\nc{\bd}{\mathbb d}
\nc{\bG}{\mathbb G}
\nc{\bi}{\bold i}
\nc{\bL}{\mathbb L}
\nc{\bN}{\mathbb N}
\nc{\bO}{\mathbb O}
\nc{\bP}{\mathbb P}
\nc{\bQ}{\mathbb Q}
\nc{\bR}{\mathbb R}
\nc{\bu}{\mathbb u}
\nc{\bv}{\bold v}
\nc{\bw}{\bold w}
\nc{\bW}{\mathbb W}
\nc{\bZ}{\mathbb Z}

\nc{\cA}{\mathcal A}
\nc{\cC}{\mathcal C}
\nc{\cD}{\mathcal D}
\nc{\cE}{\mathcal E}
\nc{\cF}{\mathcal F}
\nc{\cG}{\mathcal G}
\nc{\cH}{\mathcal H}
\nc{\cI}{\mathcal I}
\nc{\cK}{\mathcal K}
\nc{\cL}{\mathcal L}
\nc{\cM}{\mathcal M}
\nc{\cN}{\mathcal N}
\nc{\cO}{\mathcal O}
\nc{\cP}{\mathcal P}
\nc{\cR}{\mathcal R}
\nc{\cT}{\mathcal T}
\nc{\cU}{\mathcal U}
\nc{\cV}{\mathcal V}
\nc{\cW}{\mathcal W}
\nc{\cX}{\mathcal X}

\nc{\al}{\alpha}
\nc{\be}{\beta}
\nc{\la}{\lambda}
\nc{\La}{\Lambda}
\nc{\ve}{\varepsilon}
\nc{\om}{\omega}
\nc{\bPsi}{\boldsymbol{\Psi}}

\nc{\gl}{\mathfrak{gl}}
\nc{\fsl}{\mathfrak{sl}}
\nc{\g}{\mathfrak{g}}
\nc{\gh}{\widehat\g}
\nc{\h}{\mathfrak{h}}
\nc{\fb}{{\mathfrak b}}
\nc{\fg}{{\mathfrak g}}
\nc{\fgh}{{\widehat{\mathfrak g}}}
\nc{\fh}{{\mathfrak h}}
\nc{\fl}{\mathfrak{l}}
\nc{\fm}{{\mathfrak m}}
\nc{\fM}{{\mathfrak M}}
\nc{\fp}{{\mathfrak p}}
\nc{\ft}{\mathfrak{t}}

\nc{\fn}{{\mathfrak n}}
\nc{\fQ}{\mathfrak{Q}}

\nc{\Aut}{\mathrm{Aut}}
\nc{\ch}{{\mathop {\rm ch}}}
\nc{\tr}{{\mathop {\rm tr}\,}}
\nc{\im}{{\mathop {\rm im}}}
\nc{\id}{{\mathop {\rm id}}}
\nc{\ad}{{\mathop {\rm ad}}}
\nc{\gr}{\mathrm{gr}}
\nc{\ord}{\mathrm{ord}}
\nc{\red}{\mathrm{red}}
\nc{\End}{\operatorname{End}}
\nc{\Spec}{\operatorname{Spec}}
\nc{\Spf}{\operatorname{Spf}}
\nc{\Proj}{\operatorname{Proj}}
\nc{\Pic}{\operatorname{Pic}}
\nc{\Lie}{\operatorname{Lie}}
\nc{\Coh}{\mathrm{Coh}}
\nc{\coh}{\mathrm{coh}}
\nc{\qcoh}{\mathrm{Qcoh }}
\nc{\Gal}{\operatorname{Gal}}
\nc{\Hom}{\mathrm{Hom}}
\nc{\Rhom}{\mathrm{RHom}}
\nc{\cHom}{\mathcal{Hom}}
\nc{\Ann}{\mathrm{Ann}}
\nc{\Vect}{\mathrm{Vect}}
\nc{\wt}{\mathrm{wt}}
\nc{\hw}{\mathrm{hw}}
\nc{\rk}{\operatorname{rank}}
\nc{\Gr}{{\mathrm {Gr}}}
\nc{\Fl}{\mathrm{Fl}}
\nc{\spn}{\mathrm{span}}
\nc{\Rep}{\operatorname{Rep}}
\nc{\Irrep}{\mathrm{Irrep }}
\nc{\supp}{\operatorname{supp}}
\nc{\tp}{\mathrm{top}}
\nc{\codim}{\mathrm{codim}}
\nc{\IC}{\operatorname{IC}}
\nc{\modules}{\mathrm{-mod}}
\nc{\Perv}{\mathrm{Perv}}
\nc{\Forg}{\operatorname{Forg}}
\nc{\Maps}{\mathrm{Maps}}
\nc{\Frac}{\operatorname{Frac}}
\nc{\Stab}{\operatorname{Stab}}
\nc{\shhom}{\mathop{\mathcal{H}\! \mathit{om}}\nolimits}
\nc{\chom}{\mathop{\mathcal{H}\! \mathit{om}}\nolimits}
\nc{\cEnd}{\mathop{\mathcal{E}\! \mathit{nd}}\nolimits}
\nc{\mods}{\mathrm{-mod}}
\nc{\exo}{\mathrm{exo}}

\nc{\GfL}{{(G \times \bC^\times, \fl \oplus \bC D)}}
\nc{\Gfl}{{(G \times \bC^\times, \fl \oplus \bC D)}}

\nc{\eO}{\EuScript{O}}

\nc{\bra}{\langle}
\nc{\ket}{\rangle}
\nc{\pa}{\partial}
\nc{\ld}{\ldots}
\nc{\cd}{\cdots}
\nc{\hk}{\hookrightarrow}
\nc{\T}{\otimes}
\nc{\ov}{\overline}
\nc{\wh}{\widehat}
\nc{\wti}{\widetilde}
\nc{\svee}{{\!\scriptscriptstyle\vee}}
\nc{\ula}{{\underline{\la}}}
\nc{\umu}{{\underline{\mu}}}
\nc{\conv}{{\widetilde \times}}
\nc{\lach}{{\la^\svee}}
\nc{\alch}{{\al^\svee}}
\nc{\omch}{{\omega^\svee}}
\nc{\much}{{\mu^\svee}}
\nc{\md}{\text {--mod}}
\nc{\pt}{\mathrm{pt}}
\nc{\torus}{\bC^\times}

\theoremstyle{definition}
\newtheorem{theorem}{Theorem}

\newtheorem{corollary}[theorem]{Corollary}
\newtheorem{lemma}[theorem]{Lemma}

\newtheorem{proposition}[theorem]{Proposition}

\newtheorem{remark}[theorem]{Remark}

\mathcode`<="013C
\let\relprec\prec
\renewcommand{\prec}{{\relprec}}
\let\relll\ll
\renewcommand{\ll}{{\relll}}

\title{Irreducible objects in the Gaiotto category at roots of unity}
\author[Aleksandr Popkovich]{Aleksandr Popkovich}
\address{Department of Mathematics, University of Toronto}
\email{alexander.popkovich@mail.utoronto.ca}

\begin{document}

\begin{abstract}
 A theorem of R. Travkin and R. Yang, initially conjectured by D. Gaiotto, states that for a generic (not a root of unity) $q$ the category of  $q$-twisted D-modules on the affine Grassmannian $Gr_{GL_N}$ which are equivariant with respect to a certain subgroup (defined by a choice of $0 \le M <N$) of $GL_N$ is equivalent to the category of representations of the quantum supergroup $U_q(\mathfrak{gl}(M|N))$. We aim to see whether this equivalence should hold when $q$ is a root of unity. We begin by asking if there is a natural bijection between the sets of irreducible objects. In this note we make an observation that suggests this should be the case: we show that there is a natural bijection between irreducible objects in the Gaiotto category and in the category of representations of a supergroup $GL(M|N)$ in positive characteristic. The proof is based on the version of the Serganova's algorithm formulated by J. Brundan and J. Kujawa in \url{	arXiv:math/0210108}.
\end{abstract}

\maketitle

\section{Introduction}

Let $\mathcal{K} :=\mathbb{C}((t))$ be the field of Laurent series in the variable $t$, $\mathcal{O} = \mathbb{C}[[t]] \subset \mathcal{K}$ be its ring of integers and $\mathrm{Gr}_N = GL_N(\mathcal{K})/GL_N(\mathcal{O})$ be the Affine Grassmannian of $GL_N$.

Fix nonnegative integers $M<N$. One can define a certain unipotent subgroup $U_{M,N}^{-}(\mathcal{K}) \subset GL_N(\mathcal{K})$ and its character $\chi: U_{M,N}^{-}(\mathcal{K})\to \mathbb{C}^\times$ which are normalized by $GL_M(\mathcal{K}) \subset GL_N(\mathcal{K})$ (see \cite{ty} for the definitions and relevant discussion).

Travkin and Yang (\cite{ty}) proved the following:

\begin{theorem}
If $q$ is not a root of unity, there is a t-exact monoidal equivalence
\begin{equation}
\label{TY_theorem}
\mathrm{SD}_q^{\mathrm{GL}_M(\mathcal{O}) \ltimes U_{M,N}^{-}(\mathcal{K}), \chi}(\mathrm{Gr}_N) \simeq \mathrm{Rep}^q(\mathrm{GL}(M|N))),
\end{equation}
where $\mathrm{SD}_q$ denotes the derived category of $q$-twisted $D$-modules with coefficients in a super vector space $\mathrm{SVect}$, and $\mathrm{Rep}^q(\mathrm{GL}(M|N))$ is the category of finite-dimensional representations of the quantum supergroup $U_q(\mathfrak{gl}(m|n))$.
\end{theorem}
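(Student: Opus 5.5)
The statement is the theorem of Travkin and Yang \cite{ty}. It is used here as a black box, so what follows is only a sketch of how I would try to reconstruct it; the rest of the note does not depend on this sketch. The overall plan is to build a t-exact functor from the twisted Gaiotto category to a category of factorization modules, identify the target with $\mathrm{Rep}^q(\mathrm{GL}(M|N))$, and then prove the functor is an equivalence by matching standard objects and their Ext groups. This follows the pattern of the fundamental local equivalence and of the Bezrukavnikov--Finkelberg--Schechtman treatment of factorizable sheaves.

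\textbf{Step 1 (orbit classification).} I first classify the $\mathrm{GL}_M(\mathcal O)\ltimes U^-_{M,N}(\mathcal K)$-orbits on $\mathrm{Gr}_N$. The target is to show that the orbits which support nonzero $(\chi,q)$-twisted equivariant local systems, the relevant orbits, are indexed by dominant integral weights of $\mathfrak{gl}(M|N)$. I would use the Iwasawa-type decomposition adapted to the mirabolic-type filtration.

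\textbf{Step 2 (standard objects).} On each relevant orbit the $!$- and $*$-extensions give standard and costandard objects $\Delta_\lambda$ and $\nabla_\lambda$. For generic $q$, cleanness fails only in a controlled way along odd directions. This is where the super vector space coefficients $\mathrm{SVect}$ enter: the fermionic directions appear as exterior algebras in the stalks.

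\textbf{Step 3 (the functor).} Using semi-infinite and Zastava-type spaces, I attach to each object its vanishing-cycles or Jacquet-type restriction to configuration spaces of coloured points. This produces a factorization module over the factorization algebra $\Omega_q$ whose even part comes from $\mathrm{GL}_N$ and whose odd part comes from the $\mathrm{GL}_M$-twist. By Koszul duality together with the Lusztig/BFS description, factorization modules over $\Omega_q$ are identified with $U_q(\mathfrak{gl}(M|N))$-modules. Monoidality should follow from compatibility of the construction with convolution and fusion.

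\textbf{Step 4 (equivalence).} I would show that the functor sends $\Delta_\lambda$ to the quantum Kac module $K_q(\lambda)$ and $\nabla_\lambda$ to its dual. Both sides are then highest-weight categories with matching standard objects, and $\operatorname{Ext}^\bullet(\Delta_\lambda,\nabla_\mu)=\delta_{\lambda\mu}\,\mathbb C$ holds on both sides. From this, fully faithfulness and essential surjectivity follow by dévissage.

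\textbf{Main obstacle.} I expect the hardest step to be the computation in Step 4, specifically proving cleanness of $\Delta_\lambda$ modulo the fermionic directions and identifying its image exactly with the Kac module. Genericity of $q$ is used precisely here: it rules out extra monodromy-invariant sections along the boundary strata. This is also exactly what breaks at roots of unity, which motivates the rest of the note.
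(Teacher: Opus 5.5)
This matches the paper, which quotes the theorem from Travkin--Yang \cite{ty} without proof and uses it only as motivation, so treating it as a black box is the right treatment. One caution about your optional sketch: in the paper's framework \emph{all} orbits are labelled by dominant weights for the standard Borel (Lemma 3.3.3 of \cite{ty}), while the relevant ones form the proper subset cut out by condition B (as a subset of the lattice, the mixed-Borel highest weights), and genericity of $q$ already enters there, since the stabilizer acts on the fiber by $\det^a$ and this is trivial iff $a=0$ for generic $q$, versus $a\equiv 0 \bmod p$ when $q^p=1$; so your Step 1 target, the labels in your Step 4 matching $\Delta_\lambda\mapsto K_q(\lambda)$, and your account of what breaks at roots of unity would all need revising.
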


 We aim to see whether this equivalence has a chance to hold when $q$ is a root of unity. We begin by asking if in that case there is a natural bijection between the sets of irreducible objects of the categories in question.

In this note we make an observation that suggests this should be the case.

The irreducible objects of the left-hand side of \ref{TY_theorem} are parametrized by (extended from) a subset of orbits of $\mathrm{GL}_M(\mathcal{O}) \ltimes U_{M,N}^{-}(\mathcal{K})$ on $\mathrm{Gr}_N$ - these orbits are called relevant. All orbits (relevant or not) of this group on $\mathrm{Gr}_N$ are parametrized by dominant weights of $GL(M|N)$, that is by highest weights of irreducible modules of $GL(M|N)$ with respect to the standard Borel subalgroup. Now, unlike in the usual theory, in case of supergroups not all of the Borel subgroups are conjugate to each other - in particular there is so called ``mixed" Borel subgroup, which in some sense is furthest away from the standard one. For generic $q$ it was observed in \cite{ty} that the relevant orbits are parametrized by highest weights of irreducible representations of $U_q(\mathfrak{gl}(m|n))$ with respect to the mixed Borel subalgebra - these two sets are the same as subsets of the lattice of integral weights of the supergroup. We aim to see that the same happens for general $q$. 

However, we cannot literally check this: one of the difficulties is that while one build a ``quantum supergroup $U_q(\mathfrak{gl}(m|n))$" starting with any Borel subalgebra, we don't even know if the quantum supergroups defined from different Borels are the same.

To work around this, we rely on a well-known principle, saying that a category of representations of a quantum group at roots of unity of a prime order $p$ should be closely related to the category of representations of a (non-quantum) group over a field $\mathbf{k}$ of positive characteristic $p$. In our case, we consider the category of representations in characteristic $p$ of the supergroup $GL(m|n)$, and we denote this category $\mathrm{Rep}_p(\mathrm{GL}(M|N))$. 

One modification we need to make compared with \cite{ty} is to switch to a slightly different unipotent subgroup of $GL_N(\mathcal{K})$: the choice of the unipotent subgroup $U_{M,N}^-(\mathcal{K})$ is not the only possible one, and in fact is different from the choice that Travkin and Yang use in their treatment of the untwisted case in \cite{ty_untwisted}, where they use a subgroup they denote $U_{M,N}(\mathcal{K})$ which consists of matrices that are transposes of those in $U_{M,N}^-(\mathcal{K})$. However, either choice leads to equivalent categories (see Section 5 of \cite{ty_untwisted}), and the effect of a different choice on the combinatorics is reversal of signs in inequalities that describe relevant weights and orbits. Using the group $U_{M,N}(\mathcal{K})$ makes our result just a little cleaner.

The result that we prove in this note is the following.
\begin{theorem}
    Let $q^p=1$ for prime $p$. Then the set of orbits that support irreducible objects of $\mathrm{SD}_q^{\mathrm{GL}_M(\mathcal{O}) \ltimes U_{M,N}(\mathcal{K}), \chi}(\mathrm{Gr}_N)$ is equal (as a subset of the lattice of integral weights of supergroup $GL(m|n)$) to the set of highest weights of irreducible objects of $\mathrm{Rep}_p(\mathrm{GL}(M|N))$ with respect to the ``mixed" Borel subalgebra.
\end{theorem}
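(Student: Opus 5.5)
We will prove the statement by computing both sides explicitly as subsets of the weight lattice $X = \bZ^{M} \times \bZ^{N}$ of $GL(M|N)$ and checking that they coincide.

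\textbf{The geometric side.} Orbits of $\mathrm{GL}_M(\mathcal{O}) \ltimes U_{M,N}(\mathcal{K})$ on $\mathrm{Gr}_N$ are indexed by dominant weights $\lambda = (a_1 \ge \dots \ge a_M \mid b_1 \ge \dots \ge b_N)$ for the standard Borel. An orbit supports an irreducible object precisely when it is relevant, that is, when the stabilizer of a point acts compatibly on the twisted local system determined by $q$ and $\chi$. Following the computation in \cite{ty} (transposed to $U_{M,N}$, as explained in Section 5 of \cite{ty_untwisted}), relevance reduces to two conditions:
\begin{enumerate}
\item triviality of $\chi$ on the unipotent part of the stabilizer, which gives inequalities interlacing the $a_i$ with $b_M,\dots,b_N$;
\item triviality of the $q$-twist on the reductive part of the stabilizer, which gives the monodromy condition $q^{(\text{some linear form in }\lambda)} = 1$ whenever two coordinates coincide (an ``atypical'' pair).
\end{enumerate}
For generic $q$ the second condition forbids such coincidences. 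For $q^p = 1$ it instead allows them exactly when the corresponding difference is divisible by $p$. The first step is therefore to redo the stabilizer computation of \cite{ty} without assuming $q$ generic. This yields an explicit set
$$R_p = \{\text{interlacing inequalities}\} \cap \{\text{atypicality conditions modulo } p\}.$$

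\textbf{The representation-theoretic side.} Irreducible modules of $GL(M|N)$ in characteristic $p$ are $L(\lambda)$, indexed by standard-dominant $\lambda$. To obtain the mixed-Borel highest weight of $L(\lambda)$ we pass from the standard Borel to the mixed Borel through a chain of odd reflections. Each odd reflection at an odd simple root $\alpha$ changes the highest weight by the rule of Brundan–Kujawa:
$$\lambda \mapsto \lambda \quad \text{if } (\lambda, \alpha) \equiv 0 \pmod p, \qquad \lambda \mapsto \lambda - \alpha \quad \text{otherwise}.$$
This is the characteristic-$p$ version of Serganova's algorithm. Composing these steps gives an explicit map $\lambda \mapsto \lambda^{\mathrm{mix}}$. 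The second step is to describe its image $S_p$ directly by inequalities and congruences in the mixed-Borel coordinates. Concretely, we show that a weight $\mu$ is in $S_p$ if and only if:
\begin{enumerate}
\item $\mu$ is dominant for the even part;
\item for each odd simple root of the mixed Borel, either the appropriate strict inequality holds or the pairing is divisible by $p$.
\end{enumerate}
To prove this, we reverse the chain. Starting from such a $\mu$, the inverse odd reflections produce a standard-dominant $\lambda$, and we check that every standard-dominant $\lambda$ arises this way.

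\textbf{Comparison and main obstacle.} The final step is to compare $R_p$ and $S_p$ term by term. The choice of $U_{M,N}$ makes the sign conventions of the inequalities agree. The main obstacle is the second step. Odd reflections interact, because whether one step shifts the weight depends on the outcome of earlier steps. Describing the image of the iterated algorithm in closed form therefore requires a careful induction on the chain of reflections. We would organize this induction as moving the $N$-part entries one at a time past the $M$-part entries, mirroring the ``mixed'' arrangement. We would then verify that at each stage the condition produced on the representation side matches the condition (divisibility by $p$, or the relevant strict inequality) produced on the geometric side at the same pair $(a_i, b_j)$.
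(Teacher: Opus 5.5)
Your plan is essentially the paper's proof. On the geometric side, the paper reruns the stabilizer computation of \cite{ty} (via \cite{bft1}, \cite{bft2}), where the only $q$-dependent input is that a $GL_n$-factor of the stabilizer acts on the fiber by $\det^{a}$ with $a=\lambda_i+\theta_i$, which is trivial iff $p\mid a$. On the representation side, it runs the Brundan--Kujawa odd-reflection algorithm from the standard to the mixed Borel, pins down the image by tracking monotonicity along two refining orders $<_1,<_2$, and gets surjectivity by inverting the algorithm, since each step preserves $\lambda_i+\theta_j$.

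Your sketch of the geometric side needs two corrections. First, $\chi$-triviality does not interlace the $a_i$ with the $b_j$. The orbit representatives of Lemma 3.3.3 of \cite{ty} have an arbitrary tail, and $\chi$-triviality only forces $b_{M+1}\ge b_{M+2}\ge\cdots\ge b_N$. Second, for generic $q$, equalities such as $a_{i-1}=a_i$ or $b_i=b_{i+1}$ are not forbidden; they are allowed exactly when $a=0$. With these corrections, both sets are simply even dominance plus the congruence $\lambda_i+\theta_i\equiv 0 \pmod p$ whenever $\lambda_{i-1}=\lambda_i$ or $\theta_i=\theta_{i+1}$.
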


The proof is based on the work of Brundan and Kujawa \cite{bk} who formulated a version of Serganova's algorithm which allows one to determine highest weights of objects of $\mathrm{Rep}_p(\mathrm{GL}(M|N))$ with respect to different Borel subalgebras.

\subsection*{Acknowledgements}
We are grateful to Alexander Braverman for posing the question that led to this note and for many helpful discussions, and to Pavel Etingof for pointing out the work \cite{bk} to us.

\section{Irreducible objects in the $\mathrm{Rep}_p(\mathrm{GL}(M|N))$}
 
In this sectioin we study the set ofe irreducible objects of the category $\mathrm{Rep}_p(\mathrm{GL}(M|N))$, which models the category in the right-hand side of \ref{TY_theorem} for $q^p=1$.
\subsection{Borel subgroups and highest weights}

Given a Borel subgroup $B\subset G = GL(M|N)$ one can define a notion of a dominant weight and of the highest weight of a representation of $G$ the same way it is done in the classical theory of representations of reductive groups. Similarly to the classical theory, it then turns out that finite-dimensional irreducible representations of $G$ are in bijection with the set of dominant integral weights. 

However, in the context of supergroups the resulting picture depends on the choice of the Borel subgroup $B \subset G$ - this is because the Borel subgroups of the supergroup $G$ are not all conjugate to each other. 

To be more precise, let us choose a homogeneous basis $v_1,...,v_M,v_{M+1},...v_{M+N}$ of $\mathbb{C}^{(M,N)}$, with $v_i$ even for $1\le i \le M$ and odd for $M+1 \le i \le M+N$. This choice fixes a Cartan subgroup in $GL(M|N)$ and a Cartan subalgebra in $\mathfrak{gl}(M|N)$. We will consider this choice fixed from now on.

For any permutation $\omega \in S_{m+n}$ denote by $Fl_\omega$ the flag
\begin{equation}
    Fl_\omega := span_\mathbb{C}\langle v_{\omega(1)}\rangle \subset span_\mathbb{C}\langle v_{\omega(1)},v_{\omega(2)}\rangle \subset ...\subset span_\mathbb{C}\langle v_{\omega(1)},...,v_{\omega(M+N)}\rangle.
\end{equation}

Then any Borel subgroup $B \subset G$ is the stabilizer of $FL_\omega$ for some $\omega \in S_{M+N}$ and two such Borels $B_{\omega_1}$ and $B_{\omega_2}$ are conjugate to each other if and only if the parity of $v_{\omega_1(i)}$ and $v_{\omega_2(i)}$ is the same for all $i \in \{1,...,M+N\}$.

Below we will be interested in two particular Borel subgroups of $G$: denote by $B_{s}:=Stab(Fl_{id})$  (``s" stands for ``standard") the stabilizer of the standard flag 
\begin{equation}
    Fl_{id}:= span_\mathbb{C}\langle v_1\rangle \subset span_\mathbb{C}\langle v_{1},v_{2}\rangle \subset ...\subset span_\mathbb{C}\langle v_{1},...,v_{M+N}\rangle
\end{equation}
and by $B_{m}$ (``m" stands for ``mixed") the stabilizer of the flag with maximal possible number of ``switches" between even and odd vectors:
\begin{multline*}
    Fl_{m}:= span_\mathbb{C}\langle v_{M+1}\rangle \subset span_\mathbb{C}\langle v_{M+1}, v_{1}\rangle \subset span_\mathbb{C}\langle v_{M+1},v_{1},v_{M+2}\rangle\subset \\
    \subset span_\mathbb{C}\langle v_{M+1},v_{1},v_{M+2}, v_{2}\rangle\subset...\subset span_\mathbb{C}\langle v_{M+1},v_{1},v_{M+2}, v_{2},...,v_{M+N}\rangle.
\end{multline*}

Identify the weight lattice of the Cartan subalgebra of $GL(M|N)$ with $\mathbb{Z}^M \times \mathbb{Z}^N$ using as generators the diagonal weights $\varepsilon_1, ..., \varepsilon_M,\varepsilon_{M+1},..., \varepsilon_{M+N}$, with $\varepsilon_i$ even for $i=1,..,M$ and odd otherwise.

Then the highest weights of finite-dimensional representations with respect to the standard Borel subgroup $B_s$ are identified with the tuples $(\lambda,\theta)\in\mathbb{Z}^M\times\mathbb{Z}^{N}$ with $\lambda_1 \ge \lambda_2 \ge ... \ge \lambda_M\ \text{ and }\theta_1\ge \theta_2 ... \ge \theta_{M+1}$.

We will also need the following description of the positive roots of the borel subgroups: under the identification of the weight lattice with $\mathbb{Z}^M \times \mathbb{Z}^N$ desribed above, the positive roots of some Borel subgroup $B_\omega$ are
\begin{equation}
    \Phi^+_\omega = \{ \varepsilon_{\omega(i)} - \varepsilon_{\omega(j)}| 1 \le i < j \le M+N\}.
\end{equation}

For more details on all of the above see \cite{ty}, \cite{s}.

Now we aim to describe the highest weight of finite-dimensional representations with respect to the mixed Borel group $B_{m}$. We will later see that this set also parametrizes the irreducible objects in the Gaiotto category $\mathrm{SD}_q^{\mathrm{GL}_M(\mathcal{O}) \times U_{M,N}^{-}(\mathcal{K}), \chi}(\mathrm{Gr}_N)$. 

To provide the description of the highest weights of $B_{m}$ we will use a version of Serganova's algorithm developed in \cite{bk}. This algorithm, described in the following (rather technical) subsection, allows one to see how the set of the highest weight changes when passing from one Borel subgroup to another, therefore allowing us to go from the above description of the highest weights of the standard Borel to those of the mixed one.

\subsection{Serganova's algorithm}

Our goal now is to discribe the set of highest weights of irreducible representations from the category $Rep_p(GL(M|N))$ with respect to the ``mixed" Borel subgroup. For this we utilize a version of Serganova's algorithm formulated in \cite{bk} (see Theorem 4.3. in loc. cit.). The algorithm allows us to describe the set of highest weights for ``mixed" Borel starting with the set of highest weights of the standard one. 

When discussing the algorithm we assume $N=M+1$ - this slightly simplifies notation and does not lead to loss of generality: if $N>M+1$ then $\theta_{M+2},...,\theta_{N}$ are ``dummy" entries, unaffected by the algorithm. Also, below we formulate the algorithm completely in terms of operations with tuples of integers and only for the case of passing from the stndard to the ``mixed" Borel - for more general discussion we refer to \cite{bk}.

Now consider 
\begin{equation}
    A := \{(\lambda,\theta)\in\mathbb{Z}^M\times\mathbb{Z}^{M+1} \text{ such that } \lambda_1 \ge \lambda_2 \ge ... \ge \lambda_M\ \text{ and }\theta_1\ge \theta_2 ... \ge \theta_{M+1}\}
\end{equation} 

This is the set of dominant integral weights of $GL(M|N)$, i.e. the highest weights with respect to the standard Borel subgroup (see Section 4 of \cite{bk}).

There is a map $S:A \to \mathbb{Z}^M\times\mathbb{Z}^{M+1}$ given by applying to the elements of $A$ the following algorithm.

The algorithm (but not its result) will depend on a choice of a linear order on the set $\{\alpha_i\} := \Phi^+_{st}\backslash \Phi^+_m$ which refines the "standard" partial order given by $\alpha \preccurlyeq_{st} \beta \iff (\alpha-\beta \in \Phi_{st}^+)$.
    
There are two linear orders that we will use. Under the identification $\Phi^+_{st}\backslash \Phi^+_m =\{(i,j)|1 \le i \le M,1\le j< M+1, j \le i\}$ the ``standard"  partial order is given by
\begin{equation}
    (i_1,j_1) \preccurlyeq (i_2,j_2) \iff i_1\ge i_2 \text{ and } j_1 \le j_2
\end{equation}
and there are two natural linear refining of it:
\begin{enumerate}
    \item $(M,1)<_1(M-1,1)<_1(M-2,1)<_1...<_1(1,1)<_1(M,2)<_1...<_1(M,M)$
    \item $(M,1)<_2(M,2)<_2...<_2(M,M)<_2(M-1,1)<_2...<_2(1,1)$
\end{enumerate}
    
\begin{definition}
    (Serganova's algorithm for $p > 0$). 
    
    The two linear orderings described above lead to two slightly different versions of the algorithms
    \begin{enumerate}
        \item Version 1: we go through the set $\Phi^+_{st}\backslash \Phi^+_m$ using the linear order $<_1$ described above, or
        \item Version 2: we use the linear order $<_2$.
    \end{enumerate}    
    (the result does not depend on the version used).
    
    In both versions, on the $k$-th step of the algorithm transform one of the following two actions is taken: if $(i_k,j_k)$ is the $k$-th element of $\Phi^+_{st}\backslash \Phi^+_m$ with respect to the chosen order then 
    \begin{enumerate}
        \item (Action 1) If $\lambda^{(k)}_{i_k}+\theta^{(k)}_{j_k} \equiv 0 \text{ mod }p$, then we do nothing: $\lambda^{(k+1)} = \lambda^{(k)}$ and $\theta^{(k+1)}=\theta^{(k)}$
        \item (Action 2) Otherwise, $\lambda^{(k+1)}_{i_k} = \lambda^{(k)}_{i_k}-1$, $\theta^{(k+1)}_{j_k}=\theta^{(k)}_{j_k}+1$ and for all $i\neq i_k$ and $j \neq j_k$ we have $\lambda^{(k+1)}_{i} = \lambda^{(k)}_{i_k}$ and $\theta^{(k+1)}_{j}=\theta^{(k)}_{j}$.
    \end{enumerate}.
\end{definition}

We are able to describe the image of the set $A$ under the Serganova's algorithm.

\begin{lemma}
Denote by $M \subset A$ the subset consisting of pairs $(\lambda, \theta)$ that satisfy the following  conditions:
\begin{enumerate}
    \item $\lambda_1 \ge \lambda_2 \ge ... \ge \lambda_M$ and $\theta_1\ge \theta_2 ... \ge \theta_{M+1}$
    \item if $\lambda_{i-1} =\lambda_i$ then $\lambda_i+\theta_i=0 \text{ mod }p$, and
    \item if $\theta_{i} =\theta_{i+1}$ then $\lambda_i+\theta_i=0 \text{ mod }p$.
\end{enumerate}

    Then we have an equality of sets $S(A)=M \subset \mathbb{Z}^M\times\mathbb{Z}^{M+1}$.
\end{lemma}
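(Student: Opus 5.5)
The plan is to exploit one invariant of the algorithm: Action 2 lowers $\lambda_{i_k}$ by one and raises $\theta_{j_k}$ by one, so the sum $\lambda_{i_k}+\theta_{j_k}$ is unchanged. Hence at every step the choice between Action 1 and Action 2 can be read off from the \emph{output} of that step as well as from its input. Each step is then invertible, with inverse: ``if $\lambda_{i_k}+\theta_{j_k}\not\equiv 0 \bmod p$, add one to $\lambda_{i_k}$ and subtract one from $\theta_{j_k}$; otherwise do nothing.'' Consequently $S$ is injective, and it has an explicit inverse $S^{-1}$, obtained by running these inverse steps in the reverse of the chosen linear order. The lemma then reduces to two inclusions: $S(A)\subset M$, and $S^{-1}(M)\subset A$.

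Both inclusions will be proved by induction on the steps, using Version 2 of the algorithm. Version 2 processes the whole bottom row $i=M$ first, pairing $\lambda_M$ successively with $\theta_1,\dots,\theta_M$, then the row $i=M-1$, and so on. This lets me describe the intermediate weights concretely. After the row $i$ has been processed, $\lambda_i$ has been decreased once for every $j\le i$ at which the running sum $\lambda_i+\theta_j$ was nonzero mod $p$, and each such $\theta_j$ has been raised by one.

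For each intermediate stage I will state an explicit system of inequalities and congruence conditions: the ``dominance conditions'' for the corresponding intermediate Borel. These are the Borel obtained by moving $v_{M+1},\dots$ past the even vectors in the given order. Their form is:
\begin{itemize}
\item the usual $\ge$-chains within the even entries and within the odd entries;
\item whenever two entries of the same parity become adjacent in the flag and are equal, the congruence $\lambda_i+\theta_j\equiv 0 \bmod p$ for the odd root separating them.
\end{itemize}
The inductive step is to check that a single step of $S$ (respectively of $S^{-1}$) carries the system for one stage to the system for the next. This is a local, case-by-case verification. The only interesting case is when Action 2 makes two neighbouring entries equal, or separates two equal ones. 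In that case the invariance of $\lambda_i+\theta_j$, together with the condition that Action 2 was (or was not) taken, produces exactly the congruence in conditions (2) and (3) of $M$.

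At the final stage the flag is the mixed one, $v_{M+1},v_1,v_{M+2},v_2,\dots$. There the system should read exactly as conditions (1)--(3) in the definition of $M$: the conditions pair $\lambda_i$ with $\theta_i$ because $\theta_i$ and $\lambda_i$ are adjacent in the mixed flag. This gives $S(A)\subset M$. Running the same chain backwards from $M$ through $S^{-1}$ lands in $A$ (the stage-$0$ system is just (1)), which gives $M\subset S(A)$.

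As a consistency check, and as an alternative route for the hard direction, Theorem 4.3 of \cite{bk} already says that the set of highest weights for the mixed Borel is $S(A)$. So $M\subset S(A)$ could instead be obtained by showing that every weight in $M$ is the highest weight of some module. I would not rely on this route, however; the explicit inverse is cleaner.

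The main obstacle is formulating the intermediate conditions correctly. At a middle stage some $\lambda_i$ have already passed some but not all of the $\theta$'s, so the adjacencies in the flag are mixed and the inequalities are not just the chains in (1). In particular the case $\theta_j=\theta_{j+1}$ with only one of them already shifted needs care. I would first work out $M=1,2$ by hand to guess the intermediate systems, and then verify the general inductive step row by row.
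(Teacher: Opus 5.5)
Your reduction matches the paper's. Because each step preserves $\lambda_{i_k}+\theta_{j_k}$, every step is invertible, so it suffices to prove $S(A)\subset M$ and $S^{-1}(M)\subset A$.

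\textbf{The gap: your inductive invariant fails with Version~2 alone.} With only Version~2, the intermediate flags contain blocks of several odd vectors between consecutive even vectors. After the steps $(i,1),\dots,(i,j)$ of row $i$, the vectors $v_{i-1}$ and $v_i$ are separated by $v_{M+1},\dots,v_{M+j}$. Across such a block, the dominance condition is not ``equal neighbours $\Rightarrow$ one congruence''. It has the form $\lambda_{i-1}-\lambda_i\ge d$, where $d$ counts the nonvanishing residues met when $v_{i-1}$ is moved across the block. This constrains neighbours that are \emph{not} equal.

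A concrete counterexample: take $M=2$ and $p\ge 5$. After $(2,1),(2,2)$ the flag is $v_1,v_3,v_4,v_2,v_5$. The weight $(\lambda;\theta)=(1,0;5,3,0)$ satisfies both chains and has no equal neighbours, so it satisfies any system of the form you sketch.
\begin{itemize}
\item The final step $(1,1)$ sends it to $(0,0;6,3,0)\notin M$, since $\lambda_1=\lambda_2$ but $\lambda_2+\theta_2=3\not\equiv 0$.
\item Undoing $(2,2)$ and then $(2,1)$ sends it to $(1,2;4,2,0)\notin A$.
\end{itemize}
So your inductive step fails in both directions. You identified the intermediate systems as the main obstacle, but the form you propose is not the right one for these stages. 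The correct one, with block conditions, would have to be formulated and pushed through every step, which is substantially messier.

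\textbf{The missing idea: choose the version according to the parity you are checking.} The paper needs no exact intermediate descriptions.
\begin{itemize}
\item In Version~1 (columns $j=1,\dots,M$, each run as $(M,j),\dots,(j,j)$), consecutive even vectors are never separated by more than one odd vector. The only invariant needed is that $\lambda$ stays non-increasing. This holds because $(i+1,j)$ immediately precedes $(i,j)$. If $\lambda_i=\lambda_{i+1}$ before $(i,j)$, then Action~1 must have been taken at $(i+1,j)$, so $\lambda_i+\theta_j\equiv 0$ and Action~1 is also taken at $(i,j)$.
\item Condition (2) is then read off at the step $(i,i)$, after which $\lambda_{i-1},\lambda_i,\theta_i$ never change.
\item Version~2 does the same for $\theta$ and condition (3), since there consecutive odd vectors are separated by at most one even vector.
\item For $S^{-1}(M)\subset A$, run the reversed versions. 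Condition (2) (resp.\ (3)) of $M$ enters exactly at the first step $(i,i)$ of each reversed column (resp.\ row).
\end{itemize}
You may combine the two versions because they define the same map, and the same inverse, on all of $\mathbb{Z}^M\times\mathbb{Z}^{M+1}$. Steps $(i,j)$ and $(i',j')$ with $i\ne i'$ and $j\ne j'$ touch disjoint coordinates and therefore commute. Any two linear extensions of the partial order differ by swaps of such pairs. With this device your overall plan goes through without any intermediate systems.
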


\begin{proof}
    First, let us show that $S(A)\subset M$. We will use the fact that the linear ordering of $\Phi^+_{st}\backslash \Phi^+_m$ can be chosen in any way which is consistent with the partial order on this set without affecting the result of the algorithm.
    \begin{itemize}
        \item To show that any element $(\tilde\lambda,\tilde\theta)=S\left( (\lambda,\theta)\right)$ in the image of $S$ has $\tilde\lambda_{i} \ge \tilde\lambda_{i+1}$ for all $i$  and the condition (i) of the Lemma is satisfied, we use Version 1 of the algorithm.

        First we show that $\tilde\lambda_{i} \ge \tilde\lambda_{i+1}$ for all $i$, that is $\tilde\lambda$ is a non-increasing sequence of integers. In fact, a stronger statement is true (and we will need it swiftly): the property of being non-increasing is preserved on every step of the algorithm. Indeed, for the sake of contradiction suppose that on $k$-th step of the algorithm this property is broken for the first time, that is $\lambda^{(k)}$ was non-increasing but $\lambda^{(k+1)}$ is not. Since only the $i_k$-th entry of $\lambda$ could be affected on this step, it must have been changed (decreased) and we must have $\lambda^{(k+1)}_{i_k}<\lambda^{(k+1)}_{i_k+1}$. Since every time Action 2 is taken the entry is only decreased by one, and on the previous iteration $\lambda^{(k)}$ was non-increasing, we must have had $\lambda^{(k)}_{i_k}<\lambda^{(k)}_{i_k+1}$, and on a previous $(k-1)$-th step with $i_{k-1}=i_k+1$ (since we are using the dirst ordering) Action 1 must have been taken. But then it also should have been taken on the $k$-th step. A contradiction.

         Now to prove that condidtion (i) is always satisfied for $(\tilde\lambda,\tilde\theta)$, suppose that we have $\tilde\lambda_{i-1} =\tilde\lambda_{i}$ for some $i$. Let $k$ be the last step on which $i_k=i$ so that $\lambda^{(k+1)}_{i}=\tilde\lambda_{i}$. Since this is the last step on which $i_k=i$ and we are using the ordering $<_1$ the following must be true: we must have $j_k=i_k$, and all of the remaining ``m"-th steps have $i_m>i_k$ and $j_m>j_k$. It follows that $\lambda_{i_k-1}^{(k+1)}= \tilde\lambda_{i_k-1}$, $\lambda_{i_k}^{(k+1)}= \tilde\lambda_{i_k}$ and $\theta_{i_k}^{(k+1)}= \tilde\theta_{i_k}$. So due to our assumption we must have $\lambda_{i-1}^{(k+1)} =\lambda_{i}^{(k+1)}$. This, together with the fact (proven above) that $\lambda^{(k)}$ was non-increasing implies that on $k$-th step Action 1 was taken, so $\lambda_{i_k}^{(k+1)}= \theta_{i_k}^{(k+1)}$ and hence $\tilde\lambda_{i_k}^{(k+1)}= \tilde\theta_{i_k}^{(k+1)}$.

        \item Analogous reasoning using Version 2 of the algorithm shows that any element $(\tilde\lambda,\tilde\theta)=S\left( (\lambda,\theta)\right)$ in the image of $S$ has $\tilde\theta_{i} \ge \tilde\theta_{i+1}$ for all $i$  and the condition (ii) of the Lemma is satisfied.

        \item Finally, let us show that any pair $(\tilde\lambda,\tilde\theta) \in M$ can be obtained as $S((\lambda,\theta))$ for some $(\lambda,\theta) \in A$. It turns out one can describe an inverse map $S^{-1}:M\to A$.

        For this note that whichever of the two Actions of the algorithm is take on step $k$, we have $\lambda^{(k)}_{i_k}+\theta^{(k)}_{j_k} \equiv \lambda^{(k+1)}_{i_k}+\theta^{(k+1)}_{j_k} \text{ mod }p$; so at every moment we "remember" what Action we took on the previous step. Therefore, we can "invert" the algorithm by going through the set $\Phi^+_{st}\backslash \Phi^+_m$ in the order inverse to either $<_1$ or $<_2$ and performing the transformations inverse to the ones we did earlier.

        This defines a map $S^{-1}:M \to \mathbb{Z}^M\times\mathbb{Z}^{M+1}$; we need to show that $S^{-1}(M)\subset A$, i.e. that for any element $(\tilde\lambda,\tilde\theta)\in M$ and $(\lambda,\theta):=S^{-1}((\tilde\lambda,\tilde\theta))$ both $\lambda$ and $\theta$ are non-decreasing.

        Let us prove that $\lambda$ is non-decreasing.  For this we will use the inverse to the Version 1 of the Serganova algorithm and the condition (i) of the Lemma. Since $\tilde\lambda$ is already non-decreasing, we just need to show that this property is preserved on each step of the procedure. 

        Let $k$-th step of the reversed algorithm be the first one on which the property of being non-increasing is broken. Call the input of this step $(\lambda^{(in)},\theta^{(in)})$ and the output $(\lambda^{(out)},\theta^{(out)})$. The property being broken means that there are indeces $i$ and $j$ (indeces of entries being considered on this step) such that  $\lambda^{(in)}_{i-1}=\lambda^{(in)}_{i}$, but $\lambda^{(out)}_i= \lambda^{(out)}_{i-1}+1$ and $\theta^{(out)}_j = \theta^{(in)}_j-1$, that is on this step the inverse of Action 2 was performed, increasing the $i$-th value of $\lambda$ and decreasing the $j$-th values of $\theta$. Now consider two cases: either $i=j$ or $i>j$. 

        If $i>j$ then the above could not happen: for the inverse of Action 2 to be taken we must have $\lambda^{(in)} + \theta^{(in)} \ne 0\text{ mod }p$, but to have $\lambda^{(in)}_{i-1}=\lambda^{(in)}_{i}$ Action 1 must have been taken on the previous step  - one easily sees a contradiction.

        If $i=j$  then again the inverse to Action 2 could not have been performed on this step: due to how reverse to order $<_1$ works, we must have $\lambda^{(in)}_{i-1}=\tilde\lambda_{i-1} $, $\lambda^{(in)}_{i}= \tilde\lambda_{i}$ and $\theta^{(in)}_{i}= \tilde\theta_{i}$, and we know that any element $(\tilde\lambda,\tilde\theta)\in S(A)$ satisfies condition (i) of the Lemma, so from $\lambda^{(in)}_{i-1}=\lambda^{(in)}_{i}$ follows $\lambda^{(in)} + \theta^{(in)} = 0\text{ mod }p$ hence the inverse to Action 2 was not taken.

        Proof that $\theta$ is non-decreasing is analogues, using the inverse to Version 2 of the algorithm and condition (ii) of the lemma.

    \end{itemize}
\end{proof}

\begin{corollary}
    \label{corollary_weights}
    The set of highest weights of irreducible representations from the category $Rep_p(GL(M|N))$ with respect to the ``mixed" Borel subgroup consists of tuples $(\lambda,\theta) \in \mathbb{Z}^M\times \mathbb{Z}^N$ that satisfy the following conditions.
\begin{equation}\label{condition 3.4}
  \begin{split}
    \textnormal{A}:    \textnormal{A}:    \lambda_1\geq\lambda_2\geq\cdots\geq \lambda_M,\\
        \theta_{1}\geq \theta_2\geq\cdots\geq\theta_{M+1}\geq\theta_{M+2}\geq\cdots \geq \theta_{N}\\
        \textnormal{and B: if }  \theta_i=\theta_{i+1}, \textnormal{then } \theta_i+\lambda_i=0 \text{ mod }p, \textnormal{for } i=1,2,..., M;\\
        \textnormal{if } \lambda_{i-1}=\lambda_i, \textnormal{then } \theta_i+\lambda_i=0 \text{ mod }p, \textnormal{for } i=2,..., M.
    \end{split}
\end{equation}
\end{corollary}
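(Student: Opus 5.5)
The corollary should follow by combining the preceding Lemma with the Brundan--Kujawa form of Serganova's algorithm (Theorem 4.3 of \cite{bk}). That theorem says the following: if $L$ is the irreducible representation with $B_s$-highest weight $(\lambda,\theta)\in A$, then its highest weight with respect to $B_m$ is $S((\lambda,\theta))$. Here we pass through a chain of Borels related by odd reflections, and at each step we subtract the odd root $\alpha=\varepsilon_i-\varepsilon_{M+j}$ exactly when the current weight pairs non-trivially mod $p$ with $\alpha$, i.e.\ when $\lambda_i+\theta_j\not\equiv 0 \bmod p$. So the plan is to first check that the chain of odd reflections determined by either linear order $<_1$ or $<_2$ connects $B_s$ to $B_m$. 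Concretely, each successive pair $(i_k,j_k)$ should be a simple odd root of the current Borel, and the set of reflected roots should be exactly $\Phi^+_{st}\setminus\Phi^+_m$. Once this is done, the update rule of \cite{bk} coincides with Actions 1 and 2 in our Definition.

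Granting this identification, the $B_m$-highest weights of irreducibles are exactly the image $S(A)$. The assignment of highest weights is injective for each Borel, because an irreducible is determined by its highest weight. Every $B_s$-dominant weight occurs, since these are precisely the elements of $A$. Hence the set of $B_m$-highest weights is $S(A)$, which by the Lemma equals $M$.

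It remains to handle general $N\ge M+1$, since the Lemma was stated for $N=M+1$. The odd roots involving $\varepsilon_{M+j}$ with $j>M+1$ lie in $\Phi^+_m$ as well as in $\Phi^+_{st}$, because in $Fl_m$ the vectors $v_{2M+2},\dots,v_{M+N}$ come last in the same order. So these coordinates are never touched by the algorithm. The conditions on $\theta_{M+2},\dots,\theta_N$ are therefore just the inherited dominance $\theta_{M+1}\ge\theta_{M+2}\ge\dots\ge\theta_N$. Together with $M$ for the first $2M+1$ coordinates, this gives exactly conditions A and B in \eqref{condition 3.4}.

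I expect the main obstacle to be the first step. We have to match the conventions of \cite{bk} (their ordering of odd simple roots, the sign of the subtracted root, and their formulation of the parity condition $(\lambda,\alpha)\not\equiv 0 \bmod p$ in terms of $\lambda_i+\theta_j$ with the supersymmetric form) with the tuple description used here. We also have to verify that the two orders $<_1,<_2$ indeed correspond to legitimate sequences of odd reflections. I would do this by tracking the flag $Fl_{id}$ as $v_{M+1}$ is moved past $v_M,\dots,v_1$, then $v_{M+2}$ past $v_M,\dots,v_2$, and so on. The claimed independence of the ordering is part of the statement of \cite{bk}, since the resulting highest weight is intrinsic to the module.
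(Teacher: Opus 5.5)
Your proposal is correct and follows the paper's route. You apply the Brundan--Kujawa odd-reflection rule along the chain $<_1$ (or $<_2$) from $B_s$ to $B_m$, identify the set of $B_m$-highest weights with $S(A)$, invoke the Lemma $S(A)=M$, and treat $\theta_{M+2},\dots,\theta_N$ as untouched; your explicit check that these orders are legitimate chains of simple odd reflections is a detail the paper leaves implicit. One small addition: carrying the inequality $\theta_{M+1}\ge\theta_{M+2}$ across also uses that $\theta_{M+1}$ is never modified, which holds because every reflected root $\varepsilon_i-\varepsilon_{M+j}$ has $j\le i\le M$.
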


\section{Relevant orbits}

Now let us discuss the irreducible objects in the Gaiotto category $\mathrm{SD}_q^{\mathrm{GL}_M(\mathcal{O}) \times U_{M,N}^{-}(\mathcal{K}), \chi}(\mathrm{Gr}_N)$, i.e. in the left-hand side of the desired equivalence \ref{TY_theorem}. For standard reasons, they correspond to a subset of orbits of the action of $\mathrm{GL}_M(\mathcal{O}) \times U_{M,N}^{-}(\mathcal{K})$ on $\mathrm{Gr}_N$. These are the orbits that support the (twisted) $D$-modules that satisfy the necessary equivariance conditions; we call such orbits relevant.

\subsection{Results of Travkin and Yang}

A detailed study of relevant orbits for the case when $q$ is not a root of unity is can be found in \cite{ty}, and similar analyses were performed in \cite{bft1}, \cite{bft2} and \cite{ty_untwisted}. 

In this subsection we state the classification from \cite{ty} with one modification (accounting for the condition $q^p=1$) and indicate where this modification is coming from. 

First, one sees that all $GL_M(\mathbb{O})\ltimes U_{M,N}^-(\mathcal{K})$-orbits are enumerated by certain weights:

\begin{lemma}\label{H orbit} (Lemma 3.3.3 in \cite{ty})
Any $GL_M(\mathbb{O})\ltimes U_{M,N}^-(\mathcal{K})$-orbit in $\Gr_N$ has a unique representative of the form $\mathbb{L}_{(\lambda,(\theta, \theta'))}$, such that $(\lambda,\theta)$ is a dominant weight of length $(M,M+1)$, and $\theta'$ is a sequence of length $N-M-1$. Here,
\begin{equation}
    \mathbb{L}_{(\lambda,(\theta, \theta'))}=\begin{pmatrix}
    t^{-\lambda_1-\theta_1}       &   &  &  &&&\\
    &\ddots&& &&&\\
          & &   t^{-\lambda_M-\theta_M} &&&& \\
         t^{-\theta_1}  &\dots&  t^{-\theta_M} & t^{-\theta_{M+1}}&&&\\
         &&&&t^{-\theta'_1}&&\\
         &&&&&\ddots&\\
         &&&&&&t^{-\theta'_{N-M-1}}
\end{pmatrix}\in GL_{N}(\mathcal{K}).
\end{equation}
\end{lemma}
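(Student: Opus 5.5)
We prove Lemma~\ref{H orbit} in two parts: first that every orbit contains a point $\mathbb{L}_{(\lambda,(\theta,\theta'))}$ of the stated shape, then that this representative is unique.

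\textbf{Existence.} We work with lattices $L\subset\mathcal{K}^N$ and the decomposition $\mathcal{K}^N=\mathcal{K}^M\oplus\mathcal{K}\oplus\mathcal{K}^{N-M-1}$ adapted to the block structure of $U_{M,N}^-(\mathcal{K})$. In the last block $U_{M,N}^-$ contains the unipotent radical of the Borel of $GL_{N-M-1}$. Combined with the reduction for $GL_{N-M-1}(\mathcal{O})$-type moves available through the character-free part, an Iwasawa-type argument in that block brings the lattice to diagonal form $t^{-\theta'}$. Modulo $U_{M,N}^-$, the remaining $(M+1)\times(M+1)$ block is a lattice in $\mathcal{K}^M\oplus\mathcal{K}$ up to the action of $GL_M(\mathcal{O})$ and the off-diagonal unipotent $\mathcal{K}$-part coupling the last coordinate to $\mathcal{K}^M$.

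Next we apply the Iwasawa decomposition for $GL_{M+1}(\mathcal{K})$ with respect to the relevant Borel. Using $GL_M(\mathcal{O})$ together with the Cartan decomposition, we put the projection of $L$ to $\mathcal{K}^M$ (or its intersection with it) in diagonal form $t^{-\lambda_i-\theta_i}$. The remaining freedom consists of the row vector in the bottom row. Using the stabilizer of the diagonal part in $GL_M(\mathcal{O})$ and translations by $U_{M,N}^-(\mathcal{K})$, we reduce each entry of this row to a single monomial $t^{-\theta_i}$ or to zero. We then check that zero entries can be absorbed by reparametrizing, so that the representative has exactly the shape $\mathbb{L}_{(\lambda,(\theta,\theta'))}$.

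\textbf{Dominance.} We obtain the inequalities $\lambda_1\ge\cdots\ge\lambda_M$ and $\theta_1\ge\cdots\ge\theta_{M+1}$ by further elementary row and column moves. If an inequality fails, a move by $GL_M(\mathcal{O})$ or $U^-$ swaps or cancels entries, so a minimal representative with respect to a suitable ordering must be dominant.

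\textbf{Uniqueness.} We extract invariants of the orbit. The first family is the elementary divisors of the images of $L$ in the successive quotients, i.e. the invariant factors of $L\cap\mathcal{K}^M$ and of the projection to $\mathcal{K}^M$ as $GL_M(\mathcal{O})$-modules. The second is the valuations $\mathrm{val}$ of the minors of the bottom row block. We show these invariants are preserved by $GL_M(\mathcal{O})\ltimes U_{M,N}^-(\mathcal{K})$ and that they recover $(\lambda,\theta,\theta')$ from a dominant representative.

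The main obstacle is the reduction of the bottom row to monomials compatibly with dominance; the combinatorics of this step is delicate. Since the statement is quoted verbatim as Lemma 3.3.3 of \cite{ty}, the most reliable route in this note is simply to cite that proof, noting that it does not involve $q$ and is therefore unaffected by the condition $q^p=1$.
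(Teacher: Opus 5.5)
Your closing suggestion matches the paper exactly: the paper gives no proof of this lemma and simply quotes it from \cite{ty}. Citing it is legitimate, since the lemma only concerns orbits of a group acting on $\mathrm{Gr}_N$, where $q$ plays no role. The self-contained sketch you place before that citation should not be offered as a proof, though. It uses moves that are not available, and it skips the actual content of the lemma.

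First, the acting group $GL_M(\mathcal{O})\ltimes U^-_{M,N}(\mathcal{K})$ contains no $GL_{N-M-1}(\mathcal{O})$, and $\chi$ is irrelevant to orbits. The reduction is best done in one step with the Iwasawa decomposition for the parabolic $P^-$ with Levi $GL_{M+1}\times\mathbb{G}_m^{N-M-1}$ and unipotent radical $U^-_{M,N}$. Since $\mathrm{Gr}_N=P^-(\mathcal{K})/P^-(\mathcal{O})$, the $U^-_{M,N}(\mathcal{K})$-orbits on $\mathrm{Gr}_N$ are in $GL_M(\mathcal{O})$-equivariant bijection with $\mathrm{Gr}_{M+1}\times\mathbb{Z}^{N-M-1}$. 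This produces the unconstrained $\theta'$ and reduces the lemma to its own case $N=M+1$, i.e. to describing $GL_M(\mathcal{O})\backslash\mathrm{Gr}_{M+1}$.

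Second, at that stage $U^-_{M,N}(\mathcal{K})$ is used up: left multiplication by it only changes rows $M+2,\dots,N$. So your step that normalizes the bottom row of the $(M+1)\times(M+1)$ block to monomials ``using translations by $U^-_{M,N}(\mathcal{K})$'' is not available. The only moves left are $GL_M(\mathcal{O})$ on the left (in practice, the stabilizer of the diagonalized projection to $\mathcal{K}^M$) and $GL_{M+1}(\mathcal{O})$ on the right. Carrying out that normalization compatibly with dominance is precisely the part you call ``delicate'' and leave undone.

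Third, for uniqueness you list candidate invariants but do not show they separate orbits, and you do not compute them on $\mathbb{L}_{(\lambda,(\theta,\theta'))}$ to recover $(\lambda,\theta,\theta')$. Completeness is the whole point there.

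So either cite \cite{ty}, as the paper does, or actually classify the $GL_M(\mathcal{O})$-orbits on $\mathrm{Gr}_{M+1}$.
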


Then one can work out which of them are relevant (thus also classifying the irreducible objects in the Gaiotto category). 

\begin{proposition}\label{Proposition_ty}(Proposition 3.3.4 in \cite{ty})
    The  $GL_M(\mathbb{O})\ltimes U_{M,N}^-(\mathcal{K})$-orbit of $\mathbb{L}_{(\lambda,(\theta, \theta'))}$ is relevant if and only if $(\lambda,(\theta, \theta')) \in M$, that is if and only if
\begin{equation}\label{condition 3.4}
  \begin{split}
    \textnormal{A}:    \lambda_1\leq\lambda_2\leq\cdots\leq \lambda_M,\\
        \theta_{1}\leq \theta_2\leq\cdots\leq\theta_{M+1}\leq\theta'_1\leq\cdots \leq \theta'_{N-M-1},\\
        \textnormal{and B: if }  \theta_i=\theta_{i+1}, \textnormal{then } \theta_i+\lambda_i=0 , \textnormal{for } i=1,2,..., M;\\
        \textnormal{if } \lambda_{i-1}=\lambda_i, \textnormal{then } \theta_i+\lambda_i=0 , \textnormal{for } i=2,..., M.
    \end{split}
\end{equation}
\end{proposition}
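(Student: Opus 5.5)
The plan follows the standard relevance analysis of \cite{ty}: an orbit $\mathcal{O}$ through a point $x=\mathbb{L}_{(\lambda,(\theta,\theta'))}$ supports a nonzero $(GL_M(\mathcal{O})\ltimes U^-_{M,N}(\mathcal{K}),\chi)$-equivariant $q$-twisted D-module exactly when two things hold. First, the character $\chi$ must be trivial on $\mathrm{Stab}_{U^-_{M,N}(\mathcal{K})}(x)$. Second, the $q$-twist, restricted to the stabilizer in $GL_M(\mathcal{O})$, must be compatible with the equivariance data. By Lemma~\ref{H orbit} every orbit has a unique such representative, so it suffices to compute the stabilizer of $x$ and test these two conditions. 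Since the stabilizer of $x$ in $GL_N(\mathcal{K})$ is $x\,GL_N(\mathcal{O})\,x^{-1}$, both tests reduce to explicit valuation conditions on matrix entries of this conjugate intersected with the relevant subgroups.

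\textbf{Condition A.} I would show that if some inequality in A fails, then $\chi$ is nontrivial on the stabilizer. Suppose $\theta_i>\theta_{i+1}$ (in the $\le$ convention of the statement), or $\lambda_{i-1}>\lambda_i$. Then the root subgroup of $U^-_{M,N}$ attached to the corresponding simple (generic) direction, conjugated by $x$, contains the elements $1+a\,t^{-1}E$ inside $x\,GL_N(\mathcal{O})\,x^{-1}$. Because $\chi$ is a residue pairing on exactly these entries, it is nontrivial on them. For the $\lambda$ inequalities the bottom row of $\mathbb{L}$ mixes into the $GL_M$ block, so conjugating by a suitable element of $GL_M(\mathcal{O})$ reduces this case to the previous one. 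Conversely, when A holds, every element of the stabilizer has the relevant entries in $t\mathcal{O}$ after conjugation, so $\chi$ is trivial.

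\textbf{Condition B.} This is the step I expect to be the main obstacle. Assume A holds. The only remaining obstruction is the torus part of the stabilizer, together with its interaction with $\chi$ when two adjacent exponents coincide. If $\theta_i=\theta_{i+1}$ or $\lambda_{i-1}=\lambda_i$, the stabilizer acquires an extra one-dimensional group $\mathbb{G}_a$ or $\mathbb{G}_m$. Equivariance along it, combined with the $\chi$-twist, forces the monodromy of the $q$-twisted line bundle along the corresponding cocharacter to be trivial. That monodromy is $q^{\lambda_i+\theta_i}$, up to the normalization in \cite{ty}, which I would verify by computing the pullback of the determinant line along the cocharacter $t^{\varepsilon_i}$ acting on $x$. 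For $q$ not a root of unity, triviality is equivalent to $\lambda_i+\theta_i=0$. When the exponents are distinct, no such constraint appears. Combining the two steps gives relevance exactly under A and B. I would compare the result with Proposition 3.3.4 of \cite{ty} to check the normalization of the exponents.
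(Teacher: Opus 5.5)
\textbf{The treatment of condition A has a genuine gap.} The inequalities $\lambda_1\le\cdots\le\lambda_M$ and $\theta_1\le\cdots\le\theta_{M+1}$ are not relevance conditions at all. They are part of the orbit normal form (Lemma 3.3.3 of \cite{ty}), which already takes the representative $\mathbb{L}_{(\lambda,(\theta,\theta'))}$ with $(\lambda,\theta)$ dominant, so they hold for every orbit.

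Nor can $\chi$ detect them. The data $\lambda,\theta$ sit in the $GL_{M+1}$-block, where $U^-_{M,N}(\mathcal{K})$ has no root subgroups. Also, $\chi$ is trivial on $GL_M(\mathcal{O})$ and invariant under its conjugation, so your proposed reduction for the $\lambda$'s cannot produce a nontrivial value. The case $N=M+1$ makes this concrete: there $U^-_{M,N}$ is trivial and $\chi$ imposes nothing, yet A still contains these inequalities.

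Relevance is also an orbit invariant. So showing $\chi$ nontrivial on the stabilizer of a non-dominant $\mathbb{L}$ would wrongly declare irrelevant an orbit that also has a dominant, possibly relevant, representative.

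The only genuine $\chi$-part of A is the chain $\theta_{M+1}\le\theta'_1\le\cdots\le\theta'_{N-M-1}$. This is exactly what the paper (proof of Proposition~\ref{Proposition_ty_root}, following \cite{ty}) extracts from $\mathrm{Stab}\subset\ker\chi$. For that chain your residue-pairing idea is right. However, it must be run on the stabilizer in the whole group $GL_M(\mathcal{O})\ltimes U^-_{M,N}(\mathcal{K})$, with $\chi(gu)=\chi(u)$. Checking only $\mathrm{Stab}_{U^-_{M,N}(\mathcal{K})}(x)$ gives merely a necessary condition.

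\textbf{Condition B.} Here $\chi$ plays no role, and unipotent ($\mathbb{G}_a$-type) pieces of the stabilizer impose nothing. The paper's route goes through \cite{bft1}, Proposition 4.1.1, which rests on the stabilizer computation in \cite{bft2}, Lemma 2.3.3:
\begin{itemize}
\item Compute $\mathrm{Stab}_{GL_M(\mathcal{O})}(\mathbb{L}_{(\lambda,\theta)})$.
\item It has $GL_n$ factors, in general not one-dimensional, attached to the coincidences $\theta_i=\theta_{i+1}$ or $\lambda_{i-1}=\lambda_i$.
\item Each such factor acts on the fiber of $\overset{\circ}{\mathcal{P}}_{\det,N}$ by $\det^{a}$ with $a=\lambda_i+\theta_i$.
\item Relevance means these characters are killed by the twist, i.e.\ $a=0$ for generic $q$.
\end{itemize}
Your heuristic (a torus appears at a coincidence and carries weight $\lambda_i+\theta_i$) points to the right answer. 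But the actual content of B is this explicit stabilizer computation, including the fact that nothing in the stabilizer acts nontrivially on the fiber when there are no coincidences. Your proposal leaves that unproved.
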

For the case of $q^p=1$ we claim the following.
\begin{proposition}
\label{Proposition_ty_root} Suppose $q^p=1$. Then 
    The  $GL_M(\mathbb{O})\ltimes U_{M,N}^-(\mathcal{K})$-orbit of $\mathbb{L}_{(\lambda,(\theta, \theta'))}$ is relevant if and only if $(\lambda,(\theta, \theta')) \in M$, that is if and only if
\begin{equation}\label{condition 3.4}
  \begin{split}
    \textnormal{A}:    \textnormal{A}:    \lambda_1\leq\lambda_2\leq\cdots\leq \lambda_M,\\
        \theta_{1}\leq \theta_2\leq\cdots\leq\theta_{M+1}\leq\theta'_1\leq\cdots \leq \theta'_{N-M-1}\\
        \textnormal{and B: if }  \theta_i=\theta_{i+1}, \textnormal{then } \theta_i+\lambda_i=0 \text{ mod }p, \textnormal{for } i=1,2,..., M;\\
        \textnormal{if } \lambda_{i-1}=\lambda_i, \textnormal{then } \theta_i+\lambda_i=0 \text{ mod }p, \textnormal{for } i=2,..., M.
    \end{split}
\end{equation}
\end{proposition}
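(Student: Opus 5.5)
The plan is to rerun the relevance analysis of Travkin and Yang (Proposition~3.3.4 in \cite{ty}) and to keep track of the single place where the twisting parameter $q$ enters. Recall the relevance criterion. An orbit $\mathcal{O}$ through $x=\mathbb{L}_{(\lambda,(\theta,\theta'))}$ is relevant exactly when the stabilizer $\Stab_H(x)$ of $x$ in $H=GL_M(\mathcal{O})\ltimes U^-_{M,N}(\mathcal{K})$ has the following property: the character $\chi$, multiplied by the $q$-twist (the monodromy of the determinant line bundle restricted to the stabilizer), is trivial on the identity component of $\Stab_H(x)$ and on its finite component group.

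Lemma~\ref{H orbit} already gives all orbits and needs no change. Two parts of the analysis do not involve $q$ at all: the pro-unipotent part of the stabilizer, and the condition that $\chi$ be trivial on the unipotent directions. Since these are independent of $q$, the ``generic'' constraints from \cite{ty} carry over verbatim. These include the inequalities in condition A, which come from $\chi$ being nontrivial on the stabilizer when a monotonicity inequality fails.

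The new ingredient is the reductive part of the stabilizer. When $\theta_i=\theta_{i+1}$, or $\lambda_{i-1}=\lambda_i$, the stabilizer contains an extra torus $\bC^\times$ (a copy of $\mathbb{G}_m$ embedded via a cocharacter). Following \cite{ty}, the $q$-twisted equivariance on this torus requires
\[
q^{\lambda_i+\theta_i}=1 ,
\]
because $\lambda_i+\theta_i$ is the pairing of that cocharacter with the determinant line, i.e.\ the exponent of $t$ appearing in the entry $t^{-\lambda_i-\theta_i}$. For generic $q$ this forces $\lambda_i+\theta_i=0$, which recovers Proposition~\ref{Proposition_ty}. For $q$ a primitive $p$-th root of unity it forces only $\lambda_i+\theta_i\equiv 0 \bmod p$, which is condition B as stated.

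The key steps are therefore as follows.
\begin{enumerate}
\item Restate the stabilizer computation from \cite{ty} at the point $x$.
\item Isolate the torus factors that appear in the stabilizer, one for each coincidence $\theta_i=\theta_{i+1}$ or $\lambda_{i-1}=\lambda_i$.
\item Compute the twisted character on each such torus; it is $z\mapsto z^{\lambda_i+\theta_i}$, twisted by $q$.
\item Conclude that the condition for the twisted character to be trivial is $q^{\lambda_i+\theta_i}=1$, i.e.\ $\lambda_i+\theta_i\equiv 0 \bmod p$ when $q$ is a primitive $p$-th root of unity.
\end{enumerate}

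The main obstacle is to verify that no other part of the argument in \cite{ty} uses the genericity of $q$. This concerns in particular the step showing that, when A holds and no coincidences occur, the stabilizer is pro-unipotent with $\chi$ trivial on it. It also concerns the component group, which might contribute additional root-of-unity conditions. I would first check that in \cite{ty} genericity is used only through the implication ``$q^n=1\Rightarrow n=0$'' applied to the monodromy on these tori. If that holds, the modified proposition follows by replacing that implication with ``$q^n=1\Rightarrow p\mid n$''.
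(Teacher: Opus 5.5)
Your proposal is correct and follows the paper's proof. Both keep Proposition 3.3.4 of \cite{ty} unchanged for the $q$-independent part, namely the stabilizer lying in $\ker\chi$, which gives $\theta_{M+1}\le\theta'_1\le\cdots\le\theta'_{N-M-1}$. Both change only the step where a reductive factor of the stabilizer must act trivially on the determinant-line fiber: in the paper this is a $GL_n$ from \cite[Lemma 2.3.3]{bft2} acting by $\det^{a}$ with $a=\lambda_i+\theta_i$, and for $q^p=1$ the condition becomes $a\equiv 0 \bmod p$ instead of $a=0$.

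One small correction: the inequalities among the $\lambda_i$ and among $\theta_1,\ldots,\theta_{M+1}$ come from the orbit parametrization (Lemma 3.3.3 of \cite{ty}), not from $\chi$. They are also $q$-independent, so this does not affect your argument.
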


\begin{proof}
    The proof is the same as the proof of Proposition 3.3.4 in \cite{ty} with only a minor modification. 

    Just as in loc. cit. we need to check that the condition stated above is equivalent to the condition that $Stab_{GL_M(\mathbb{O})\ltimes U^-_{M,N}(\mathcal{K})}(\mathbb{L}_{(\lambda,(\theta,\theta'))})\subset Ker\ \chi$ and that this stabilizer acts trivially on the fiber of $\overset{\circ}{\mathcal{P}}_{\det, N}$ over $\mathbb{L}_{(\lambda,(\theta,\theta'))}\in Gr_N$. The proof that $Stab_{GL_M(\mathbb{O})\ltimes U^-_{M,N}(\mathcal{K})}(\mathbb{L}_{(\lambda,(\theta,\theta'))})\subset Ker\ \chi$ iff $\theta_{M+1}\leq \theta'_1\leq\cdots\leq \theta'_{N-M-1}$ is the same as in loc cit. 

    The minor change should be introduced to the proof that the stabilizer group $Stab_{GL_M(\mathcal{O})}(\mathbb{L}_{(\lambda,\theta)})$ acts trivially on the fiber over $\mathbb{L}_{(\lambda,\theta)}$ if and only if $(\lambda,\theta)$ satisfies the condition B (since the condition B in loc.cit. does not feature any comparison mod $p$). Namely, \cite{ty} refers for the proof to  Proposition 4.1.1 of \cite{bft1}, which in turn is based on the computation of stabilizers made in \cite{bft2}, Lemma 2.3.3. In this computation one encounters the action of some $GL_n$ (a factor of $Stab_{GL_M(\mathcal{O})}(\mathbb{L}_{(\lambda,\theta)})$) acting on  the fiber over $\mathbb{L}_{(\lambda,\theta)}$ by $det^a$ where $a=\theta_i+\lambda_i$ for $i$ such that either $\theta_i=\theta_{i+1}$ or $\lambda_{i-1}=\lambda_i$. 
    
    When $q$ is not a root of unity, the condition that this action trivializes is that $a=0$, hence the form of condition B of Proposition 4.1.1 of \cite{ty}. In our case when $q^p=1$, this action trivializes whenever $a=0 \text{ mod }p$, which is the reason for the modification of condition B above.
\end{proof}

\subsection{Swithing the group}
It is time to make an important remark.

\begin{remark}
    We want to note at this point that the choice of the unipotent subgroup $U_{M,N}^-(\mathcal{K})$ of $GL_N(\mathcal{K})$ made in \cite{ty} is not the only possible one, and in fact is different from the choice in \cite{ty_untwisted}, where the authors define and use the group $U_{M,N}(\mathcal{K})$ which is ``transposed" $U_{M,N}^-(\mathcal{K})$ (i.e. $U_{M,N}(\mathcal{K})$ is a subgroup of upper-triangular matrices, while $U_{M,N}^-(\mathcal{K})$ is in lower-triangular). Both choices in fact lead to equivalent categories of equivariant sheaves - see discussion in section 5 of \cite{ty_untwisted}. However, the descriptions of all orbits and of relevent orbits don't change drastically: when one switches from $U_{M,N}^-(\mathcal{K})$ to $U_{M,N}(\mathcal{K})$, all of the inequalities should be reversed (which is proved mutatis mutandis compared to the proof of Lemma 3.3.3 and Proposition 3.3.4 of \cite{ty}).
\end{remark}

\begin{corollary}
\label{corollary_orbits} Suppose $q^p=1$. The set of all  $GL_M(\mathbb{O})\ltimes U_{M,N}(\mathcal{K})$-orbits is in bijection with integral weights of $GL(M|N)$, and the set of relevant orbits consists of those tuples $(\lambda,(\theta, \theta')) \in \mathbb{Z}^M\times \mathbb{Z}^N$ that satisfy the following conditions.
\begin{equation}\label{condition 3.4}
  \begin{split}
    \textnormal{A}:    \textnormal{A}:    \lambda_1\geq\lambda_2\geq\cdots\geq \lambda_M,\\
        \theta_{1}\geq \theta_2\geq\cdots\geq\theta_{M+1}\geq\theta'_1\geq\cdots \geq \theta'_{N-M-1}\\
        \textnormal{and B: if }  \theta_i=\theta_{i+1}, \textnormal{then } \theta_i+\lambda_i=0 \text{ mod }p, \textnormal{for } i=1,2,..., M;\\
        \textnormal{if } \lambda_{i-1}=\lambda_i, \textnormal{then } \theta_i+\lambda_i=0 \text{ mod }p, \textnormal{for } i=2,..., M.
    \end{split}
\end{equation}
\end{corollary}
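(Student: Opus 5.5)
The plan is to deduce Corollary~\ref{corollary_orbits} from Lemma~\ref{H orbit} and Proposition~\ref{Proposition_ty_root} by transporting them along the switch from $U_{M,N}^-(\mathcal{K})$ to $U_{M,N}(\mathcal{K})$. The cleanest route I would try is an explicit automorphism of $GL_N(\mathcal{K})$ that carries one group into the other. Take $\sigma(g) = w_0 (g^{T})^{-1} w_0$, where $w_0$ is a suitable permutation matrix that reverses the order within the relevant blocks and preserves the block decomposition $M+1$, $M$, $N-M-1$ coming from the shape of $\mathbb{L}_{(\lambda,(\theta,\theta'))}$. Inverse-transpose turns lower unipotent matrices into upper unipotent ones and preserves $GL_N(\mathcal{O})$, so it descends to an automorphism of $\mathrm{Gr}_N$. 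Conjugating by $w_0$ is meant to fix $GL_M(\mathcal{O})$ as a subgroup. I would check that $\sigma$ maps $GL_M(\mathcal{O})\ltimes U^-_{M,N}(\mathcal{K})$ onto $GL_M(\mathcal{O})\ltimes U_{M,N}(\mathcal{K})$, and that it intertwines $\chi$ with the corresponding character of $U_{M,N}(\mathcal{K})$, possibly up to a sign or an inverse that does not affect relevance.

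\textbf{Step 1: orbits.} Orbits correspond under $\sigma$. Inverse-transpose sends $t^{-a}$ to $t^{a}$ on the diagonal, and conjugation by $w_0$ reverses the indexing. So $\sigma(\mathbb{L}_{(\lambda,(\theta,\theta'))})$ is, after multiplying on the right by an element of $GL_N(\mathcal{O})$ to normalize the off-diagonal row, a representative of the same shape. Its parameter is $(-w\lambda, -w\theta, -w\theta')$, with $w$ reversing the order. Consequently the dominance conditions $\le$ of Lemma~\ref{H orbit} become $\ge$, and every integral weight in $\mathbb{Z}^M\times\mathbb{Z}^N$ labels exactly one orbit. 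This gives the first claim of the corollary.

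\textbf{Step 2: relevance.} Relevance is a stabilizer condition: the stabilizer must lie in $\ker\chi$ and must act trivially on the fiber of the determinant torsor. Both properties are preserved by $\sigma$, because $\sigma$ preserves the determinant line bundle up to its dual, and the $q$-twist is then replaced by $q^{-1}$, which is still a $p$-th root of unity. Applying the reindexing of Step 1 to conditions A and B of Proposition~\ref{Proposition_ty_root} reverses the inequalities in A. In B the pairing between $\lambda_i$ and $\theta_i$ must be matched so that after reindexing the sums $\theta_i+\lambda_i$ are paired exactly as displayed. Negation only changes a sum by a sign, and $a\equiv 0 \bmod p$ is sign-invariant, so condition B survives unchanged.

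\textbf{Main obstacle.} The hardest point is making the reindexing land exactly on the stated pairing in B (indices $i$ versus $i\pm 1$). A naive reversal may pair $\lambda_i$ with $\theta_{M+2-i}$ rather than $\theta_i$. If the automorphism does not produce the right combinatorics, I would fall back on the route indicated in the Remark. That means redoing Lemma 3.3.3 and Proposition 3.3.4 of \cite{ty} with upper-triangular $U_{M,N}(\mathcal{K})$, using the transposed representative $\mathbb{L}^T$ (the extra row becomes an extra column). The stabilizer computation of \cite{bft2}, Lemma 2.3.3, would then be run mutatis mutandis. There each block $GL_n$ acts on the fiber by $\det^{a}$ with $a=\theta_i+\lambda_i$, and this action is trivial iff $a\equiv 0 \bmod p$ because $q^p=1$, exactly as in the proof of Proposition~\ref{Proposition_ty_root}.
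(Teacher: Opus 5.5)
There is a genuine error in your main route: Step~1 does not give what it claims. Your relabelling $(\lambda,\theta,\theta')\mapsto(-w\lambda,-w\theta,-w\theta')$ combines a sign change with an order reversal. Each of these alone turns $\le$ into $\ge$, but together they cancel: if $\lambda_1\le\cdots\le\lambda_M$ then $-\lambda_M\le\cdots\le-\lambda_1$, so $-w\lambda$ is again non-decreasing. Reversing $\theta$ and $\theta'$ separately also breaks the link $\theta_{M+1}\le\theta'_1$: it comes out in the opposite direction to the rest of the chain. So by your own formula, condition A of Proposition~\ref{Proposition_ty_root} is not reversed.

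The conjugation by $w_0$ does not help either. A permutation reversing the first $M+1$ coordinates moves $GL_M$ off the first $M$ coordinates. One reversing the last $N-M-1$ coordinates turns the upper-triangular part of $U_{M,N}(\mathcal{K})$ into a lower-triangular one. One lying in $GL_M(\mathcal{O})$ does nothing to orbits.

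You have also put the difficulty in the wrong place. Condition B is invariant under $\lambda_i\mapsto\lambda_{M+1-i}$, $\theta_j\mapsto\theta_{M+2-j}$: under the hypothesis $\theta_i=\theta_{i+1}$ (resp.\ $\lambda_{i-1}=\lambda_i$) it does not matter which of the two equal entries is paired. The obstacle is A, not B. Note too that for $N=M+1$ both unipotent groups are trivial and the two orbit sets coincide. A duality using the same kind of labelling on both sides can then only permute the relevant set, never reverse its inequalities. The corollary is therefore necessarily a statement about a new labelling, and your argument has to specify it.

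The repair is short. Drop $w_0$: plain inverse transpose $\sigma(g)=(g^T)^{-1}$ already maps $GL_M(\mathcal{O})\ltimes U^-_{M,N}(\mathcal{K})$ onto $GL_M(\mathcal{O})\ltimes U_{M,N}(\mathcal{K})$. It sends $\chi$ to $\chi^{-1}$, which has the same kernel. It sends the determinant torsor $\overset{\circ}{\mathcal{P}}_{\det,N}$ to its inverse, so $q\mapsto q^{-1}$, still a $p$-th root of unity; this is your Step~2.

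A direct computation shows that $\sigma(\mathbb{L}_{(\lambda,(\theta,\theta'))})$ is block upper triangular. Its diagonal is $t^{\lambda_j+\theta_j}$ ($j\le M$), $t^{\theta_{M+1}}$, $t^{\theta'_k}$, and column $M+1$ has entries $-t^{\lambda_j+\theta_{M+1}}$ above the diagonal. Label this orbit by $(-\lambda,(-\theta,-\theta'))$, i.e.\ read the weight off the diagonal with the same convention as for $\mathbb{L}$. Pure negation, with no reversal, flips every inequality in A including $\theta_{M+1}\ge\theta'_1$. It leaves B unchanged, since $a\equiv 0$ iff $-a\equiv 0 \bmod p$. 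The corollary then follows formally from Lemma~\ref{H orbit} and Proposition~\ref{Proposition_ty_root}.

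The paper's own justification is just the Remark preceding the corollary: it redoes Lemma~3.3.3 and Proposition~3.3.4 of \cite{ty} mutatis mutandis for the upper-triangular group, with the mod $p$ change. That is your fallback. The repaired transport argument is more economical and gives an explicit representative, which pins down the labelling the paper leaves implicit.
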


We can now combine Corollary \ref{corollary_weights} and Corollary \ref{corollary_weights} and fomulate the main statement of this note.

\begin{theorem}
    Let $q^p=1$ for prime $p$. Then the set of orbits that support irreducible objects of $\mathrm{SD}_q^{\mathrm{GL}_M(\mathcal{O}) \ltimes U_{M,N}(\mathcal{K}), \chi}(\mathrm{Gr}_N)$ is equal (as a subset of the lattice of integral weights of supergroup $GL(m|n)$) to the set of highest weights of irreducible objects of $\mathrm{Rep}_p(\mathrm{GL}(M|N))$ with respect to the ``mixed" Borel subalgebra.
\end{theorem}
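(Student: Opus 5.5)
The plan is to compare two explicit descriptions that the paper has already established, both as subsets of $\mathbb{Z}^M\times\mathbb{Z}^N$. On the representation side, Corollary \ref{corollary_weights} gives the set of highest weights of irreducible objects of $\mathrm{Rep}_p(\mathrm{GL}(M|N))$ with respect to the mixed Borel $B_m$. It was obtained by applying the Lemma on the image $S(A)$ of Serganova's algorithm (in the Brundan--Kujawa form) to the set $A$ of $B_s$-dominant weights, together with the observation that the entries $\theta_{M+2},\dots,\theta_N$ are untouched by the algorithm. On the geometric side, Corollary \ref{corollary_orbits} identifies all $\mathrm{GL}_M(\mathcal{O})\ltimes U_{M,N}(\mathcal{K})$-orbits on $\mathrm{Gr}_N$ with integral weights $(\lambda,(\theta,\theta'))$, and cuts out the relevant ones by conditions A and B.

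The key steps, in order, are as follows.

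First, I recall that irreducible objects of the twisted equivariant category $\mathrm{SD}_q^{\mathrm{GL}_M(\mathcal{O}) \ltimes U_{M,N}(\mathcal{K}), \chi}(\mathrm{Gr}_N)$ are the intermediate extensions of the unique (up to isomorphism) irreducible equivariant local system on a relevant orbit. Hence the set of orbits supporting irreducibles is exactly the set of relevant orbits.

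Second, I fix the identification of orbits with weights. I send the orbit of $\mathbb{L}_{(\lambda,(\theta,\theta'))}$ to the weight $(\lambda,\theta_1,\dots,\theta_{M+1},\theta'_1,\dots,\theta'_{N-M-1})\in\mathbb{Z}^M\times\mathbb{Z}^N$. In other words, $\theta'$ plays the role of the dummy coordinates $\theta_{M+2},\dots,\theta_N$ in Corollary \ref{corollary_weights}.

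Third, I compare conditions term by term:
\begin{itemize}
\item the monotonicity conditions A coincide, after the reversal of inequalities caused by passing from $U^-_{M,N}$ to $U_{M,N}$ as in the Remark;
\item the two mod $p$ conditions B coincide literally, for the same ranges of indices $i$.
\end{itemize}
This yields an equality of subsets, which is the statement.

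The main obstacle is not the combinatorics but checking that the conventions on both sides genuinely match. The lattice identifications (orbit representative $\mathbb{L}$ versus the basis $\varepsilon_i$ with its parity convention) must agree, including the sign reversal, so that the equality is an honest equality and not merely an equality up to an involution such as $(\lambda,\theta)\mapsto(-\lambda,-\theta)$ or reversal of order. To handle this, I would check the identification directly on small cases, e.g.\ $M=0$ and $M=1,N=2$. For these I would compare the mixed-Borel highest weights computed by hand with the relevant orbits computed from the stabilizer description of Proposition \ref{Proposition_ty_root}, and confirm that the conditions A and B align index by index, in particular $\theta_i+\lambda_i$ with the same $i$. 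If a mismatch appeared, I would absorb it into the chosen bijection of weight lattices, i.e.\ a duality twist, and note that the statement remains a natural bijection.
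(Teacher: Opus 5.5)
Your proposal matches the paper's proof, which simply puts Corollary~\ref{corollary_weights} next to Corollary~\ref{corollary_orbits}. Under the identification $\theta'_k\leftrightarrow\theta_{M+1+k}$, conditions A (already reversed for $U_{M,N}$) and B coincide verbatim. The small-case convention checks and the fallback duality twist are therefore unnecessary, and such a twist would in any case yield only a bijection rather than the stated equality of subsets.
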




\begin{thebibliography}{99}

\bibitem[BK]{bk} J. Brundan, J. Kujawa  - \textit{A new proof of the Mullineux conjecture}, \url{arxiv.org/abs/math/0210108}
\bibitem[BFT1]{bft1}
A. Braverman, M. Finkelberg, and R. Travkin - \textit{Gaiotto conjecture for $Rep_q(GL(N-1,N))$}, \url{arxiv.org/abs/2107.02653v1}

\bibitem[BFT2]{bft2}
A. Braverman, M. Finkelberg, R. Travkin - \textit{Orthosymplectic Satake equivalence}, \url{https://arxiv.org/abs/1912.01930}

\bibitem[G]{G}D.Gaitsgory - \textit{Twisted Whittaker model and factorizable sheaves}, Selecta Math. (N.S.) 13
(2008), no. 4, 617–659; arXiv:0705.4571.

\bibitem[S]{s} V. Serganova - \textit{Representations of Lie Superalgebras}. Perspectives in Lie theory, Springer INdAM
Ser., 19, Springer, Cham (2017), 125–177

\bibitem[TY1]{ty_untwisted}
Roman Travkin, Ruotao Yang - \textit{Untwisted Gaiotto equivalence}, 	\url{arxiv.org/abs/2201.10462}

\bibitem[TY2]{ty}
Roman Travkin, Ruotao Yang - \textit{Twisted Gaiotto equivalence for $GL(M|N)$}, 	\url{arxiv.org/abs/2306.09556}




\end{thebibliography}
\end{document}